
\documentclass[11pt, reqno]{amsart}

\usepackage{amsmath, amsthm, amssymb, amsfonts}
\usepackage{eucal}
\usepackage{hyperref}

\usepackage[top=3.75cm, bottom=3cm, left=3cm, right=3cm]{geometry}
\frenchspacing

\theoremstyle{plain}
\newtheorem{theorem}{Theorem}
\newtheorem{question}[theorem]{Question}
\newtheorem{lemma}[theorem]{Lemma}

\theoremstyle{definition}
\newtheorem{remark}[theorem]{Remark}

\DeclareMathOperator{\Proj}{Proj}
\DeclareMathOperator{\Sym}{Sym}

\newcommand{\cE}{\mathcal{E}}
\newcommand{\cO}{\mathcal{O}}
\newcommand{\cX}{\CMcal{X}}

\newcommand{\bA}{\mathbf{A}}
\newcommand{\bC}{\mathbf{C}}
\newcommand{\bP}{\mathbf{P}}
\newcommand{\bQ}{\mathbf{Q}}

\newcommand{\rH}{\mathrm{H}}

\newcommand{\tX}{\widetilde{X}}


\begin{document}

\date{\today}

\title[Rationality does not specialize among terminal fourfolds]{Rationality does not specialize among \\ terminal fourfolds}

\author{Alexander Perry}
\address{Department of Mathematics, Columbia University, New York, NY 10027 \smallskip}
\email{aperry@math.columbia.edu}

\thanks{This work was partially supported by an NSF postdoctoral fellowship, DMS-1606460.}

\begin{abstract}
We show that rationality does not specialize in flat projective 
families of complex fourfolds with terminal singularities. 
This answers a question of Totaro, who established the 
analogous result in all dimensions greater than 4. 
\end{abstract}

\maketitle


\section{Introduction}

Rationality behaves subtly in families of complex algebraic varieties. 
In general, given a flat projective family, the locus of rational fibers forms 
a countable union of locally closed subsets of the base~\cite[Proposition~2.3]{defernex}. 
Recently, Hassett, Pirutka, and Tschinkel~\cite{HPT} produced a smooth 
projective family of fourfolds where none of these locally closed subsets 
is dense, but their union is. In particular, rationality is neither an open nor 
closed condition in smooth families. 

This paper concerns the question of whether the locally 
closed subsets parameterizing the rational fibers of a family 
are actually closed, i.e. whether rationality specializes. 

\begin{question}
\label{question-specialization}
Given a flat projective family of complex varieties, does geometric rationality 
of the generic fiber imply the same of every fiber? 
\end{question}

Without further restrictions, the answer is negative, as shown by a family 
of smooth cubic surfaces degenerating to a cone over a smooth cubic curve. 
However, if the fibers of the family are required to be smooth of dimension 
at most $3$, de Fernex and Fusi~\cite{defernex} proved the answer is positive. 
In fact, as Totaro observed, it follows from the results of~\cite{defernex} and Hacon and 
M\textsuperscript{c}Kernan~\cite{hacon-mckernan} that the answer remains 
positive if the fibers are allowed to have log terminal singularities and dimension at most~$3$.

In higher dimensions, however, Totaro~\cite{totaro-specialization} showed that 
rationality does not specialize among varieties with mild singularities. 
Namely, specialization fails in every dimension greater than~$4$ if terminal singularities 
(the mildest type of singularity arising from the minimal model program) are allowed, 
and in dimension $4$ if canonical singularities (the second mildest type of singularity) 
are allowed. This left open the possibility that rationality specializes among 
terminal fourfolds. The purpose of this paper is to show that this fails too: 

\begin{theorem}
\label{main-theorem}
There is a flat projective family of fourfolds over a Zariski open neighborhood 
$U$ of the origin $0 \in \bA^1$ in the complex affine line, such that: 
\begin{enumerate}
\item \label{c1} All the fibers have terminal singularities. 
\item \label{c2} The fibers over $U \setminus \{0\}$ are rational. 
\item \label{c3} The fiber over $0$ is stably irrational. 
\end{enumerate}
\end{theorem}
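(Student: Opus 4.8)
The plan is to construct the family as a pencil of quadric surface bundles over $\mathbf{P}^2$ that degenerates a family of rational fourfolds to a singular, but still terminal, fourfold of Hassett--Pirutka--Tschinkel type. Here a quadric surface bundle means a subvariety $X \subset \mathbf{P}(\mathcal{E})$ cut out by a nonzero symmetric form $q \colon \Sym^2 \mathcal{E} \to L$ for a rank-$4$ vector bundle $\mathcal{E}$ and a line bundle $L$ on $\mathbf{P}^2$; the projection $\pi \colon X \to \mathbf{P}^2$ has generic fibre a quadric surface over $K := \mathbf{C}(\mathbf{P}^2)$, and it degenerates over the discriminant curve $D = \{\det q = 0\} \subset \mathbf{P}^2$. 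Three facts will carry the construction. (i) By Hassett--Pirutka--Tschinkel~\cite{HPT} there is a choice of $(\mathcal{E}, L, q)$ for which $X$ is stably irrational, the obstruction being a nonzero class in $\mathrm{Br}_{\mathrm{nr}}(\mathbf{C}(X))$ produced from the even Clifford algebra of $q$ over $K$; this computation is birational and survives passage to any $\mathrm{CH}_0$-trivial resolution. (ii) If the generic-fibre quadric of $q$ is isotropic over $K$ --- for instance if $X$ contains a rational section of $\pi$ --- then $X$ is rational, being birational to $\mathbf{P}^2 \times \mathbf{P}^2$. (iii) If $D$ has at worst nodes and the rank of $q$ drops no lower than $3$ away from a controlled finite set, then $X$ has at worst ordinary double points, which are terminal in dimension $\geq 3$; this last point is exactly where the construction must strengthen Totaro's canonical fourfold example~\cite{totaro-specialization}.

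Next I would write down an explicit pencil of symmetric forms $q_t$ over $\mathbf{P}^2$, $t \in \mathbf{A}^1$, such that $q_0$ is the Hassett--Pirutka--Tschinkel form of (i) (or one stably birational to it), while for every $t \neq 0$ the form $q_t$ is isotropic over $K$. The perturbation should be rigged so that an isotropic vector of $q_t$, viewed as a section of $\mathbf{P}(\mathcal{E})$, acquires a pole as $t \to 0$; in this way $q_0$ stays anisotropic and retains the obstruction of (i) even though the nearby fibres are rational. Let $\mathcal{X} \to \mathbf{A}^1$ be the resulting pencil of quadric surface bundles, with fibres $X_t$. I would then run a discriminant-and-rank analysis over the whole pencil: choosing $q_0$ and the perturbing term appropriately, the discriminant curves $D_t$ have only nodes for every $t$ and the rank of $q_t$ stays $\geq 3$ outside a controlled finite set, so that by (iii) every $X_t$ --- the special fibre included --- has at worst terminal singularities. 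Discarding the finitely many exceptional parameters yields the Zariski open neighbourhood $U \ni 0$ over which the family is flat with the stated properties.

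What remains is bookkeeping. For $t \in U \setminus \{0\}$, isotropy of $q_t$ together with (ii) gives that $X_t$ is rational, which is~\eqref{c2}; for $t = 0$, applying (i) to a $\mathrm{CH}_0$-trivial resolution of the nodal fourfold $X_0$ --- blowing up the ordinary double points introduces only smooth $3$-dimensional quadrics, hence does not alter $\mathrm{Br}_{\mathrm{nr}}$ --- shows $X_0$ is stably irrational, which is~\eqref{c3}; and the pencil analysis gives~\eqref{c1}. The hard part, I expect, is the simultaneous control in the middle paragraph: arranging the perturbation so that it manufactures an isotropic vector for $t \neq 0$ while neither destroying the unramified obstruction at $t = 0$ nor letting the discriminant curves or the quadric ranks degenerate past nodes on any fibre. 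One should also confirm there is no conflict with the Voisin--Colliot-Th\'el\`ene--Pirutka specialization principle: that principle needs a \emph{smooth} generic fibre, whereas here the generic fibre of $\mathcal{X} \to U$ is itself a singular (terminal) quadric bundle and the singular locus of $\mathcal{X}$ is horizontal, so there is no global $\mathrm{CH}_0$-trivial resolution of $\mathcal{X}$ through which the obstruction on $X_0$ could propagate to the rational fibres $X_t$.
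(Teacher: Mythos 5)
Your proposal takes a genuinely different route from the paper, but it contains a gap that is not a matter of bookkeeping: it is precisely the hard point that the paper's construction is designed to avoid. The entire weight of your argument rests on claim (iii) together with the assertion that one can ``rig'' a quadric surface bundle $q_0$ of Hassett--Pirutka--Tschinkel type so that the total space $X_0$ has only ordinary double points while still carrying a nonzero unramified Brauer class. You give no such $q_0$, and the actual HPT fourfold does not satisfy this: its quadratic form drops to rank $1$ over the coordinate points of $\bP^2$ (the fibre there is a double plane), and its singular locus contains positive-dimensional components --- the resolution in~\cite{HPT} is an explicit multi-step affair, not a blow-up of isolated nodes. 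Moreover, your stated criterion (nodal discriminant, rank $\geq 3$ off a finite set) does not by itself yield ``at worst ODPs'': the local structure of the total space at a point where the rank drops to $2$ or below depends on more than the discriminant being nodal. There is a genuine tension here --- the nontrivial unramified class forces the bundle to degenerate badly enough along the discriminant --- and whether a stably irrational quadric surface bundle fourfold with only terminal singularities exists is exactly the kind of question that is not settled by declaring the perturbation ``controlled.'' The pencil step (isotropic for $t \neq 0$, anisotropic at $t = 0$, with simultaneous singularity control) is likewise only a wish list; you acknowledge it is the hard part but do not carry it out.

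For comparison, the paper sidesteps all of this by taking $X$ to be the cone over a very general quartic double solid $Y$ (stably irrational by Voisin~\cite{voisin}), viewed as a hypersurface $x_4^2 = h_4(x_0,\dots,x_3)$ in $\bP(1^4,2)$ and coned inside $\bP(1^4,2,1)$. Stable irrationality of $X$ is then immediate from the birational equivalence $X \sim Y \times \bP^1$, and terminality reduces to a one-line discrepancy computation on the $\bP^1$-bundle $\bP(\cO_Y(-H)\oplus\cO_Y)$ resolving the cone point, where the discrepancy of the exceptional divisor is $1$ precisely because $Y$ is Fano of index $2$ ($K_Y = -2H$). The deformation $f_4 + t\,g_3 x_5 = 0$ makes the nearby fibres rational by projection away from the $x_5$-coordinate, and terminality is Zariski open in the family. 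If you want to salvage your approach, the missing ingredient is an explicit quadric surface bundle over $\bP^2$ with nonzero unramified Brauer class \emph{and} terminal total space; absent that, the argument does not go through.
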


\begin{remark}
By taking products with projective spaces, the theorem gives families 
of varieties of any dimension at least $4$ satisfying conditions~\eqref{c1}-\eqref{c3}, 
and hence reproves Totaro's result. 
\end{remark}

Our proof of Theorem~\ref{main-theorem} closely follows~\cite{totaro-specialization}. 
There, starting from a stably irrational smooth quartic fourfold $Y \subset \bP^5$ 
(known to exist by~\cite{totaro-hypersurfaces}), Totaro constructs a family of fivefolds 
satisfying conditions~\eqref{c1}-\eqref{c3} in Theorem~\ref{main-theorem} by 
deforming the cone over $Y$ to rational fivefolds. In fact, starting from any stably 
irrational smooth hypersurface $Y \subset \bP^n$ which is Fano of index at least $2$, 
his argument can be run to produce a family of $n$-folds satisfying~\eqref{c1}-\eqref{c3}. 
It is thus tempting to prove Theorem~\ref{main-theorem} by taking such a $Y \subset \bP^4$. 
However, then the only potential candidate for $Y$ is a stably irrational cubic threefold, 
the existence of which is a difficult open problem. 
Our idea is to instead take $Y$ to be a quartic double solid. Then $Y$ is a Fano threefold 
of index~$2$, and can be chosen to be stably irrational by Voisin's seminal work~\cite{voisin}. 
Although $Y$ is not a hypersurface in projective space, it is a hypersurface in a 
\emph{weighted} projective space, which we show is enough to run Totaro's argument. 

The natural question left open by this paper is whether rationality specializes 
among smooth varieties of dimension greater than $3$. 

\subsection*{Conventions} 
We work over the field of complex numbers $\bC$. 
For positive integers $a_0, \dots, a_n$, we denote by $\bP(a_0, \dots, a_n)$ 
the weighted projective space with weights $a_i$. 
We use superscripts to denote that a weight is repeated with multiplicity, 
e.g. $\bP(1^4,2) = \bP(1,1,1,1,2)$. 
For a vector bundle $\cE$ on a scheme $S$, the associated projective 
bundle is $\bP(\cE) = \Proj_S(\Sym(\cE^{\vee}))$. 

\subsection*{Acknowledgements}
Theorem~\ref{main-theorem} was conceived during Burt Totaro's talk on~\cite{totaro-specialization} 
at the Higher Dimensional Algebraic Geometry Conference at the University of Utah in July 2016. 
I thank Burt for his comments on a draft of this paper, and the organizers of the conference 
for creating a stimulating environment. 


\section{Proof of Theorem~\ref{main-theorem}}
Let $Y \to \bP^3$ be a quartic double solid, i.e. a double cover of $\bP^3$ 
branched along a smooth quartic surface. 
We regard $Y$ as a hypersurface in the weighted projective space $\bP(1^4, 2)$, 
cut out by a polynomial of the form 
\begin{equation*}
f_4(x_0, \dots, x_4) = x_4^2 - h_4(x_0, \dots, x_3),
\end{equation*}
where $h_4(x_0, \dots, x_3)$ is a quartic. 
Let $X \subset \bP(1^4, 2, 1)$ be the cone over $Y$ defined by the same polynomial 
$f_4(x_0, \dots, x_4)$ in the bigger weighted projective space $\bP(1^4, 2, 1)$.
For a stably irrational choice of $Y$, the variety $X$ will form the central fiber 
in the promised family of fourfolds. 

\begin{lemma}
\label{lemma-X}
The following hold:
\begin{enumerate}
\item \label{lemma-X-1} $X$ is birational to $Y \times \bP^1$. 
\item \label{lemma-X-2} $X$ has terminal singularities. 
\end{enumerate}
\end{lemma}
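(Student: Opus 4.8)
For part~\eqref{lemma-X-1}, the plan is simply to project from the cone point $v=[0:0:0:0:0:1]$. Since $f_4$ does not involve $x_5$, the projection $\bP(1^4,2,1)\dashrightarrow\bP(1^4,2)$, $[x_0:\cdots:x_5]\mapsto[x_0:\cdots:x_4]$, restricts to a morphism $X\setminus\{v\}\to Y$, and over the open subset $\{x_0\neq 0\}\subset Y$ it is visibly the trivial $\bA^1$-bundle with fibre coordinate $x_5/x_0$. Hence $X$ contains a dense open subset isomorphic to an open subset of $Y\times\bA^1$, so $X$ is birational to $Y\times\bA^1$, and therefore to $Y\times\bP^1$. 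This part is routine.

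Part~\eqref{lemma-X-2} carries the content. The first step is to pin down the singular locus of $X$ and record its good properties. The space $\bP(1^4,2,1)$ is singular only at $P=[0:0:0:0:1:0]$, a $\tfrac12(1,1,1,1,1)$ cyclic quotient singularity, and $X$ avoids $P$ since $f_4(0,0,0,0,1,0)=1\neq 0$; thus $X$ is a Cartier divisor in the smooth locus of the ambient space, hence normal and Gorenstein. Testing smoothness of $X$ on its affine cone $\{f_4=0\}\subset\bA^6$, a Jacobian computation using that the branch quartic $\{h_4=0\}\subset\bP^3$ is smooth shows this cone is singular precisely along the $x_5$-axis, which maps to $v$. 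So $X$ is smooth away from $v$, and it remains to show the singularity at $v$ is terminal.

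In the affine chart $\{x_5\neq 0\}\cong\bA^5$, the variety $X$ is the hypersurface $\{x_4^2=h_4(x_0,x_1,x_2,x_3)\}$ with $v$ at the origin — that is, the affine cone over $Y\subset\bP(1^4,2)$ with respect to $\cO_Y(1)$. The plan is to resolve it by the weighted blow-up $\sigma\colon V\to\bA^5$ of the origin with weights $(1,1,1,1,2)$, which is adapted to $f_4$ in that every monomial of $x_4^2-h_4$ has weighted degree $4$. Its exceptional divisor is $E_0=\bP(1^4,2)$, and the proper transform $\widetilde X$ meets $E_0$ in the locus cut out by the weighted-homogeneous polynomial $x_4^2-h_4$ itself, which is a copy of $Y$. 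Granting that $\widetilde X$ is smooth — so that $\widetilde X\to X$ is a log resolution with unique, smooth exceptional divisor $E\cong Y$ — the discrepancy follows from $K_V=\sigma^*K_{\bA^5}+5E_0$, from $\sigma^*X=\widetilde X+4E_0$ (the weighted multiplicity of $f_4$ is $4$), and from adjunction $(K_{\bA^5}+X)|_X=K_X$; together these give $K_{\widetilde X}=\sigma^*K_X+E$. Since the only exceptional discrepancy equals $1>0$, the variety $X$ is terminal at $v$, hence terminal. (This is an instance of the standard computation showing that the cone over a smooth Fano variety $Y$ with $-K_Y=r\,\cO_Y(1)$ is terminal precisely when $r\geq 2$; here $r=2$ is the index of the quartic double solid.)

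The main obstacle is the claim, used above, that $\widetilde X$ is smooth, which I would check chart by chart on $V$. In the four charts attached to weight-$1$ coordinates the blow-up is an ordinary one and the proper transform is again a hypersurface of the shape $\{y_4^2=h_4(1,y_1,y_2,y_3)\}$, whose smoothness reduces directly to that of an affine chart of $\{h_4=0\}$. The delicate chart is the one attached to $x_4$, where $V$ is a $\tfrac12(1,1,1,1,1)$-quotient of $\bA^5$: there the total transform of $f_4$ becomes $z_4^4\bigl(1-h_4(z_0,z_1,z_2,z_3)\bigr)$, so the proper transform is the quotient of the hypersurface $\{h_4(z_0,z_1,z_2,z_3)=1\}\subset\bA^5$ by the involution negating all coordinates; this hypersurface avoids the origin — the unique fixed point of the involution — so the action is free and the quotient is smooth. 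Once smoothness of $\widetilde X$ is secured, the discrepancy calculation and the conclusion are immediate.
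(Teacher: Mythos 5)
Your proposal is correct, and while it lands on exactly the same resolution as the paper, it gets there and computes the discrepancy by a genuinely different route. The paper builds the resolution globally as the $\bP^1$-bundle $\tX = \bP(\cO_Y(-H)\oplus\cO_Y)$, maps it to $\bP(1^4,2,1)$ via sections of $\zeta$ and $2\zeta$, identifies the exceptional divisor with the section $\bP(\cO_Y)\cong Y$, and then extracts the discrepancy $a$ by adjunction \emph{along} $E$ inside $\tX$: $K_E=(K_{\tX}+E)|_E=(a+1)E|_E$, together with $K_Y=-2H$ and $E|_E=-H$. You instead localize at the vertex, recognize $X$ there as the affine cone over $Y\subset\bP(1^4,2)$, and resolve by the ambient weighted $(1,1,1,1,2)$-blow-up, reading off the discrepancy from $K_V=\sigma^*K_{\bA^5}+5E_0$, $\sigma^*X=\widetilde{X}+4E_0$, and adjunction of $\widetilde{X}$ \emph{in} $V$. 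The two exceptional divisors coincide (your $\widetilde{X}$ near $E$ is the total space of $\cO_Y(-H)$), and both computations output discrepancy $1$; both then use the standard fact that a resolution with all exceptional discrepancies positive forces terminal singularities. What your version buys is explicitness at the cost of a chart check: you must verify smoothness of the proper transform in the weight-$2$ chart, where $V$ has a $\tfrac12(1,1,1,1,1)$ point, and your argument there (the cover $\{h_4=1\}$ misses the origin, so the $\mu_2$-action is free) is correct. You also supply details the paper leaves implicit --- that $X$ misses the singular point of $\bP(1^4,2,1)$, is normal and Gorenstein, and is smooth away from the vertex --- which are needed to know that the single divisor $E$ accounts for the whole exceptional locus. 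One small caveat: the parenthetical ``terminal precisely when $r\geq 2$'' should be read as the sufficiency direction only, which is all you use.
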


\begin{proof}
Let $H$ denote the pullback of the hyperplane class on $\bP^3$ to $Y$. 
Define 
\begin{equation*}
\pi \colon \tX = \bP(\cO_Y(-H) \oplus \cO_Y) \to Y. 
\end{equation*}
There is a natural morphism $\tX \to \bP(1^4, 2,1)$ given as follows.  
Let $\zeta$ denote the divisor corresponding to the relative $\cO(1)$ line bundle on $\tX$. 
Then 
\begin{align*}
\pi_*(\cO_{\tX}(\zeta)) & = \cO_Y(H) \oplus \cO_Y ,  \\ 
\pi_*(\cO_{\tX}(2\zeta)) & = \cO_Y(2H) \oplus \cO_Y(H) \oplus \cO_Y. 
\end{align*}
Hence $\rH^0(\tX, \cO_{\tX}(\zeta)) \cong \bC^4 \oplus \bC$, and 
$\rH^0(\tX, \cO_{\tX}(2\zeta))$ has a canonical $1$-dimensional subspace 
corresponding to the canonical section of $\cO_Y(2H)$. 
This data specifies the morphism \mbox{$\tX \to \bP(1^4, 2, 1)$}. 
In fact, this morphism factors through $X \subset \bP(1^4, 2,1)$ and gives a resolution 
of singularities $f \colon \tX \to X$ with a single exceptional divisor 
\begin{equation*}
E = \bP(\cO_Y) \subset \tX, 
\end{equation*} 
which is contracted to $[0,0,0,0,0,1] \in X$. In particular, \eqref{lemma-X-1} holds. 

Note that $X$ is normal with $\bQ$-Cartier canonical divisor. 
We show that the discrepancy of the exceptional divisor $E$ above is $1$, 
so that~\eqref{lemma-X-2} holds. 
Write $K_{\tX} = f^*(K_X) + aE$. Then by adjunction 
\begin{equation*}
K_E = (K_{\tX} + E)|_E = (a+1)E|_E. 
\end{equation*}
Observe that $E \cong Y$, so $K_E = -2H$, and $E = \zeta - \pi^*H$, so $E|_E = -H$. 
We conclude $a = 1$. 
\end{proof}

Next choose a nonzero polynomial $g_3(x_0, \dots, x_4) \in \rH^0(\bP(1^4,2), \cO(3))$ 
of weighted degree~$3$. 
We consider the flat family $\cX \to \bA^1$ over the affine line 
whose fiber $\cX_t \subset \bP(1^4, 2, 1)$ over $t \in \bA^1$ is given by 
\begin{equation*}
f_4(x_0, \dots, x_4) + tg_3(x_0, \dots, x_4)x_5 = 0. 
\end{equation*}
Note that $X = \cX_0$. 

\begin{lemma}
\label{lemma-family}
There is a Zariski open neighborhood $U$ of $0 \in \bA^1$ such that: 
\begin{enumerate}
\item $\cX_t$ has terminal singularities for all $t \in U$. 
\item $\cX_t$ is rational for $t \in U \setminus \{ 0 \}$. 
\end{enumerate}
\end{lemma}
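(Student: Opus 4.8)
The plan is to handle rationality of the general fiber by an explicit projection, and to control the singularities of \emph{every} fiber by recognizing them, near the unique singular point, as those of a double cover whose branch locus acquires only an ordinary triple point. Throughout I take $g_3 = g_3(x_0,\dots,x_3)$ to be a general cubic form in the weight-$1$ variables alone, so that $\{g_3 = 0\} \subset \bP^3$ is a smooth cubic surface and $\{h_4 = g_3 = 0\} \subset \bP^3$ is a smooth complete intersection curve; these are Zariski-open conditions on $(h_4,g_3)$, compatible with the (very general) choice of $h_4$ used later in the proof of Theorem~\ref{main-theorem}. For rationality: when $t \neq 0$, the projection $\bP(1^4, 2, 1) \dashrightarrow \bP(1^4, 2)$ forgetting $x_5$ restricts to a birational map $\cX_t \dashrightarrow \bP(1^4,2)$, since on the dense open locus $\{g_3 \neq 0\}$ it admits the section
\begin{equation*}
[x_0 : \cdots : x_4] \longmapsto \bigl[\, x_0 : \cdots : x_4 : -f_4(x_0,\dots,x_4)/(t\, g_3(x_0,\dots,x_3)) \,\bigr],
\end{equation*}
whose last coordinate has the correct weighted degree and which lands in $\cX_t$. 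As $\bP(1^4, 2)$ is rational, so is $\cX_t$.

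For the singularities, I would first show that $\mathrm{Sing}(\cX_t) = \{v\}$ for every $t$, where $v = [0:0:0:0:0:1]$. Since $\cX_t \cap \{x_5 = 0\} = Y$ is smooth, $\cX_t$ is smooth at each point of $Y$: the $x_5$-derivative of the defining equation is $t g_3$, and at a point $p \in Y$ where $g_3$ vanishes the remaining derivatives are those of $f_4$ (because $x_5$ vanishes there), which cannot all vanish since $Y = \{f_4 = 0\}$ is smooth. On the chart $\{x_5 \neq 0\} \cong \bA^5$ the fiber is $\{x_4^2 = h_4(x_0,\dots,x_3) - t g_3(x_0,\dots,x_3)\}$, whose singular locus is $\{x_4 = 0\}$ times the singular locus of $\{h_4 - t g_3 = 0\} \subset \bA^4$; feeding the Euler relations for the homogeneous forms $h_4$ and $g_3$ into the equations $h_4 - tg_3 = 0$, $\nabla(h_4 - tg_3) = 0$ forces a singular point of the latter to satisfy $g_3 = h_4 = 0$ with $\nabla h_4$ and $\nabla g_3$ linearly dependent, which by smoothness of the complete intersection $\{h_4 = g_3 = 0\}$ happens only at the origin. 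Hence $\mathrm{Sing}(\cX_t) = \{v\}$; as $\cX_t$ also avoids the unique singular point $[0:0:0:0:1:0]$ of $\bP(1^4,2,1)$, it is normal with $\bQ$-Cartier canonical divisor, so ``terminal'' makes sense.

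It then remains to check $\cX_t$ is terminal at $v$. For $t = 0$ this is Lemma~\ref{lemma-X}\eqref{lemma-X-2}. For $t \neq 0$, the chart description exhibits $\cX_t$ near $v$ as the double cover of $\bA^4$ branched over $B_t = \{h_4 - t g_3 = 0\}$, which has an ordinary triple point at the origin: its leading form $-t g_3$ defines the smooth cubic surface $\{g_3 = 0\} \subset \bP^3$, and $\mathrm{Sing}(B_t) = \{0\}$ by the computation above. By the standard comparison of the singularities of a double cover with those of the associated branch pair, $\cX_t$ is terminal at $v$ if and only if the pair $(\bA^4, \frac{1}{2}B_t)$ is terminal at the origin; since blowing up the origin is a log resolution of this pair with a single exceptional divisor $E$, for which $a(E; \bA^4, \frac{1}{2}B_t) = (4-1) - \frac{1}{2}\cdot 3 = \frac{3}{2} > 0$, the pair — and hence $\cX_t$ — is terminal. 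As this argument applies to every $t$, one may in fact take $U = \bA^1$.

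I expect the main obstacle to be the singularity at $v$: because $f_4$ contains the term $x_4^2$, every fiber has multiplicity $2$ there, so (unlike in Totaro's hypersurface setting) terminality is not visible after a single blowup. The device that resolves this is the double-cover structure, which trades the problem for the singularity of the branch locus $B_t$; making it work forces the two design choices above — that $g_3$ omit the variable $x_4$ (otherwise completing the square turns $B_t$ into a non-reduced double point rather than an ordinary triple point, which I do not expect to be terminal), and that $\{h_4 = g_3 = 0\}$ be smooth (which is precisely what pins the singular locus of every fiber down to $\{v\}$, uniformly in $t$). Verifying the double-cover/branch-pair comparison and the uniformity of the singular-locus computation are the two points requiring care.
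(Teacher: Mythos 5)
Your proposal is correct, but it proves part (1) by a genuinely different route than the paper. The paper's argument is a two-line soft argument: the central fiber is terminal by Lemma~\ref{lemma-X}, and having terminal singularities is a Zariski-open condition in flat families (Nakayama, \emph{Zariski-decomposition and abundance}, Corollary VI.5.3), so terminality propagates to an unspecified open neighborhood $U$ of $0$; no genericity of $g_3$ beyond being nonzero is needed. You instead make a specific choice of $g_3$ (a general cubic in the weight-$1$ variables) and verify terminality of \emph{every} fiber by hand: the Euler-relation computation pinning $\mathrm{Sing}(\cX_t)$ to the vertex $v$ is correct, the identification of $\cX_t$ near $v$ with a double cover of $\bA^4$ branched over $B_t = \{h_4 - tg_3 = 0\}$ is correct, and the discrepancy comparison $a(E';\cX_t) = r\,a(E;\bA^4,\tfrac12 B_t) + r - 1$ does reduce terminality of $\cX_t$ at $v$ to terminality of the pair $(\bA^4, \tfrac12 B_t)$, which holds since $B_t$ has an ordinary triple point with smooth projectivized tangent cone $\{g_3=0\}$. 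What your approach buys is an explicit, effective statement ($U = \bA^1$ works, and no appeal to Nakayama's openness theorem); what it costs is length and the genericity hypotheses on $g_3$. The one step you should tighten is the final terminality check: positivity of the discrepancy of the single exceptional divisor of one log resolution does \emph{not} by itself characterize terminality of a pair (unlike the klt case), since further blowups of centers inside the strict transform of $B_t$ or inside its intersection with $E$ produce new divisors. You need the standard criterion for log smooth pairs: after the blowup, the crepant pullback is $K + \tfrac12\widetilde{B}_t - \tfrac32 E$, and since the coefficient $\tfrac12$ and every sum of coefficients of intersecting components are less than $1$, all higher discrepancies remain positive. This is routine (Koll\'ar--Mori, Corollary 2.31) but should be said.
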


\begin{proof}
The fiber $\cX_0$ has terminal singularities by Lemma~\ref{lemma-X}. 
Since this condition is Zariski open in families~\cite[Corollary VI.5.3]{nakayama}, 
there is a Zariski open neighborhood $U$ of $0 \in \bA^1$ such that 
all fibers of $\cX_U \to U$ are terminal. 
Further, observe that for $t \neq 0$, projection away from the $x_5$-coordinate 
gives a birational map from  $\cX_t$ to $\bP(1^4,2)$. 
Indeed, this map is an isomorphism over the locus where $g_3(x_0, \dots x_4) \neq 0$ 
in $\bP(1^4,2)$. Hence $\cX_t$ is rational for $t \neq 0$. 
\end{proof}

Now we can prove Theorem~\ref{main-theorem}. 
By~\cite[Theorem~1.1]{voisin}, a very general quartic double solid is stably irrational. 
Taking such a $Y$ in the above construction and combining 
Lemmas~\ref{lemma-X} and~\ref{lemma-family}, we conclude that 
$\cX_U \to U$ is a family of fourfolds satisfying all of the required conditions. \qed


\providecommand{\bysame}{\leavevmode\hbox to3em{\hrulefill}\thinspace}
\providecommand{\MR}{\relax\ifhmode\unskip\space\fi MR }
\providecommand{\MRhref}[2]{%
  \href{http://www.ams.org/mathscinet-getitem?mr=#1}{#2}
}
\providecommand{\href}[2]{#2}

\end{document}